\newtheorem{theorem}{Theorem}
\newcommand{\Qn}{\mathcal{Q}_n}
\newcommand{\subsets}[2]{[#1]^{(#2)}}
\title{A note on balanced independent sets in the cube}
\author{Ben Barber\footnote{Department of Pure Mathematics and Mathematical Statistics, Centre for Mathematical
Sciences, Wilberforce Road, Cambridge, CB3 0WB, UK.  {\tt b.a.barber@dpmms.cam.ac.uk}}}
\begin{document}

\maketitle

\begin{abstract}
Ramras conjectured that the maximum size of an independent set in the discrete cube $\Qn$ containing equal numbers of sets of even and odd size is $2^{n-1}-\binom{n-1}{(n-1)/2}$ when $n$ is odd.  We prove this conjecture, and find the analogous bound when $n$ is even.  The result follows from an isoperimetric inequality in the cube.
\end{abstract}

The discrete hypercube $\Qn$ is the graph with vertices the subsets of $[n]=\{1,\ldots,n\}$ and edges between sets whose symmetric difference contains a single element.  The cube $\Qn$ is bipartite, with classes $X_0$ and $X_1$ consisting of the sets of even and odd size respectively.  The maximum-sized independent sets in $\Qn$ are precisely $X_0$ and $X_1$.  Ramras \cite{Ramras} asked: how large an independent set can we find with half its elements in $X_0$ and half in $X_1$?  Call such an independent set \emph{balanced}.  The following result verifies the conjecture made by Ramras for the case where $n$ is odd.

\begin{theorem}
 The largest balanced independent set in $\Qn$ has size
\begin{align*}
 2^{n-1} - 2 \binom{n-2}{(n-2)/2} & \qquad\text{if $n$ is even,}\\
 2^{n-1} -   \binom{n-1}{(n-1)/2} & \qquad\text{if $n$ is odd.}
\end{align*}
\end{theorem}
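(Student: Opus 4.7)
The plan is to recast the problem as an isoperimetric inequality in $\Qn$ and then establish that inequality by compression. Given a balanced independent set $I$ of size $2k$, write $A = I \cap X_0$ and $B = I \cap X_1$, so $|A| = |B| = k$. Since every edge of $\Qn$ crosses the bipartition, the independence of $I$ amounts to $B \cap N(A) = \emptyset$, i.e.\ $B \subseteq X_1 \setminus N(A)$; as $A$ and $N(A)$ are disjoint, such a balanced set of size $2k$ exists iff some $A \subseteq X_0$ with $|A| = k$ satisfies $|A \cup N(A)| \leq 2^{n-1}$. Therefore the largest balanced independent set has size $2k^*$, where
\[
k^* := \max\bigl\{|A|:A\subseteq X_0,\; |A\cup N(A)|\leq 2^{n-1}\bigr\}.
\]

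The key ingredient is then a lower bound on $|A \cup N(A)|$ for $A \subseteq X_0$ of prescribed size. Harper's vertex-isoperimetric theorem gives such a bound in general, but its extremal Hamming balls lie in both bipartition classes, so restricting to $A \subseteq X_0$ should force a strictly stronger inequality. I would attempt to prove this by compression. The standard $i$-compression $S \mapsto S \setminus \{i\}$ flips the parity of $|S|$ and so leaves $X_0$; instead one should use \emph{pair-compressions} $S \mapsto S \triangle \{i,j\}$ with $i < j$, which preserve $|S| \bmod 2$. Applied in a fixed direction (replacing each $S$ with $j \in S$, $i \notin S$ by $(S \setminus \{j\}) \cup \{i\}$ when the result is not already in $A$), such a pair-compression keeps $|A|$ fixed and, via a case analysis on how $\{i,j\}$ meets $S$ and its $\Qn$-neighbours, should not increase $|A \cup N(A)|$.

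Iterating reduces the problem to checking the inequality on a compressed $A$ --- an initial segment in an appropriate parity-respecting simplicial order on $X_0$. Inspection of small cases suggests that the extremal compressed sets at the critical threshold $|A \cup N(A)| = 2^{n-1}$ take the form ``all even subsets of $[n]$ up to a certain size, together with, when needed, a star of even sets containing a fixed coordinate in the next non-trivial layer''. A direct calculation --- using $\sum_{j=0}^{(n-1)/2}\binom{n}{j} = 2^{n-1}$ when $n$ is odd --- then recovers the claimed values $k^* = 2^{n-2} - \tfrac12 \binom{n-1}{(n-1)/2}$ (odd $n$) and $k^* = 2^{n-2} - \binom{n-2}{(n-2)/2}$ (even $n$). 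The main difficulty lies in the compression step itself: pair-compressions can alter $N(A)$ in somewhat subtle ways, and the case check is where the parity constraint forces the strengthening over Harper's theorem.
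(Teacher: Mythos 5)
Your reduction is correct and coincides with the paper's first step: since every neighbour of an even set is odd, a balanced independent set of size $2k$ exists if and only if some $A \subseteq X_0$ with $|A| = k$ satisfies $|A| + |N(A)| \leq 2^{n-1}$, and your final values of $k^*$ agree with the theorem. The isoperimetric inequality you then require is precisely a known theorem of Bezrukov and of K\"orner and Wei, which the paper simply cites as its Theorem~2: among $A \subseteq X_0$ of given size, the initial segment of the simplicial order restricted to $X_0$ minimises $|N(A)|$, and the complement of its neighbourhood in $X_1$ is a terminal segment. So your overall architecture is the paper's; the difference is that you propose to reprove this lemma by compression and to identify the optimal initial segment by inspection of small cases, and both steps have genuine gaps.

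The compression plan as stated cannot succeed. The operations you actually apply are the size-preserving swaps $S \mapsto (S \setminus \{j\}) \cup \{i\}$, and these fix every layer of the cube setwise; hence any union of lex initial segments, one in each even layer, is stable under all of them, and such families need not be extremal. For instance $A = \{\emptyset, \{1,2,3,4\}\}$ is fixed by every such swap, yet $|N(A)| = 2n$, whereas the simplicial initial segment $\{\emptyset, \{1,2\}\}$ of the same size has neighbourhood of size $2n-2$. To create the nested, ball-like structure across layers you must also use the parity-preserving but size-changing moves $S \mapsto S \setminus \{i,j\}$ (which you mention via $S \triangle \{i,j\}$ but then discard by fixing the direction), and controlling $|A \cup N(A)|$ under those is exactly the content of the K\"orner--Wei/Bezrukov theorem --- not a routine case check: as in all Harper-type results, families stable under all pointwise compressions need not be initial segments, and a separate structural argument is required. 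Separately, your guessed extremal configurations are wrong for even $n$: the critical-layer piece is a lex initial segment but not a star --- for $n \equiv 0 \pmod 4$ it is $12 + \subsets{3,n}{2k-2}$ (sets containing two fixed elements), and for $n \equiv 2 \pmod 4$ it is $(1+\subsets{2,n}{2k-1}) \cup (2+\subsets{3,n}{2k-1})$ --- and in any case optimality among initial segments must be verified, not inferred from small cases. The efficient repair is the paper's route: cite Theorem~2, then exhibit the explicit initial segment $A \subseteq X_0$ and terminal segment $B \subseteq X_1$ in each residue class of $n$ modulo $4$, checking $|A| = |B|$ and $N(A) \cap B = \emptyset$ by standard binomial identities.
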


For a set $A$ of vertices of $\Qn$, write $N(A)$ for the set of vertices adjacent to an element of $A$.  The maximal independent sets in $\Qn$ all have the form $A\cup (X_1\setminus N(A))$ for some $A\subseteq X_0$.  So for a maximum-sized balanced independent set we seek the largest $A\subseteq X_0$ for which
\[
 |A| \leq |X_1\setminus N(A)|.
\]

We use the following isoperimetric theorem for even-sized sets, due independently to Bezrukov \cite{Bezrukov} and K\"orner and Wei \cite{KornerWei} (see also Tiersma \cite{Tiersma}).  Recall that $x<y$ in the \emph{simplicial} order on $\Qn$ if either $|x| < |y|$, or $|x| = |y|$ and $x<y$ lexicographically.

\begin{theorem}[\cite{Bezrukov}, \cite{KornerWei}]
 Let $A \subseteq X_0$, and let $B$ be the initial segment of the simplicial order restricted to $X_0$ with $|B|=|A|$.  Then $|N(B)| \leq |N(A)|$, and $X_1 \setminus N(B)$ is a terminal segment of the simplicial order restricted to $X_1$.
\end{theorem}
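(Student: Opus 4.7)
The plan is a compression argument. Every operation must preserve the parity of set sizes --- so that $A$ stays in $X_0$ --- and must not increase $|N(A)|$. Standard compression methods for Harper's theorem involve deleting a single element and so change parity, so they must be adapted.

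For each $1 \le i < j \le n$ I define the $(i,j)$-shift $S_{ij}(A)$, replacing every $a \in A$ with $j \in a$, $i \notin a$ by $(a\setminus\{j\})\cup\{i\}$ unless the target already lies in $A$. This preserves $|a|$ and hence $X_0$, and a standard swap argument gives an injection from $N(S_{ij}(A)) \setminus N(A)$ into $N(A) \setminus N(S_{ij}(A))$, so $|N(S_{ij}(A))| \le |N(A)|$. Iterating these shifts to a fixed point produces a left-compressed set $A^*$ whose restriction to each layer $\subsets{n}{r}$ is left-shifted.

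The main obstacle is that being left-shifted on every layer does not force $A^*$ to be an initial segment of the simplicial order: mass can remain on an upper layer while lower layers of the same parity are not full, and a naive vertical exchange that moves the lex-last set of an upper layer down to the lex-first missing set of a lower layer can actually increase $|N|$ (small examples in $Q_5$ confirm this). To complete the reduction I would induct on $n$ via the codimension-one decomposition, writing $A = A_0 \sqcup A_1$ according to whether $n$ belongs to the set. Then $A_0$ is a family of even-sized subsets of $[n-1]$, while stripping $n$ from the members of $A_1$ yields a family of odd-sized subsets of $[n-1]$; $|N(A)|$ splits into two terms each involving a $Q_{n-1}$-neighborhood together with a cross-term. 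Applying the isoperimetric theorem inductively in both parity classes of $Q_{n-1}$ (the $X_1$ version is obtained from the $X_0$ version by complementation) and then rebalancing between $A_0$ and $A_1$ to match the simplicial order on $X_0$ drives $A$ to the initial segment $B$.

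Once all the compressions terminate, $A^* = B$ and $|N(B)| \le |N(A)|$. The terminal-segment statement for $X_1 \setminus N(B)$ follows because $N(B)$ is itself an initial segment of the simplicial order on $X_1$ whenever $B$ is an initial segment on $X_0$, a fact easily verified by induction on $|B|$ (each new set added to $B$ in simplicial order contributes opposite-parity neighbors that extend $N(B)$ as the next block of the simplicial order on $X_1$).
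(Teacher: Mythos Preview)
The paper does not prove this statement: Theorem~2 is quoted from Bezrukov and K\"orner--Wei and used as a black box in the proof of Theorem~1. There is no proof in the paper to compare your attempt against.

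Judged on its own, your outline has the right shape---parity-preserving $(i,j)$-shifts followed by a codimension-one induction is a standard route to isoperimetric inequalities of this kind---but it is not yet a proof. The $(i,j)$-shift step is fine (it is standard that $N(S_{ij}A)\subseteq S_{ij}(N(A))$, layer by layer), and the closing observation that $N$ of a simplicial initial segment of $X_0$ is a simplicial initial segment of $X_1$ is correct and easy. The genuine gap is exactly where you yourself flag it: you note that left-compression on each layer does not force a simplicial initial segment, you note that the naive vertical exchange can increase $|N|$, and then you write ``I would induct on $n$'' and speak of ``rebalancing between $A_0$ and $A_1$'' without saying what the rebalancing operations are or why they do not increase the neighbourhood. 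That step is the entire content of the theorem. Splitting on the coordinate $n$ produces two families in $\mathcal{Q}_{n-1}$ of opposite parities, and after the inductive hypothesis each is an initial segment of its own simplicial order; but you must still show that this \emph{pair} of initial segments can be adjusted to the pair coming from a single simplicial initial segment of $X_0$ in $\mathcal{Q}_n$ without the cross-term in $|N|$ growing, and nothing you have written addresses that monotonicity. Until the merging argument is made precise, what you have is a plan rather than a proof.
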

% Corrected X_1 \ B -> X_1 \ N(B) 6/3/2012

\begin{proof}[Proof of Theorem~1]

We will exhibit an initial segment $A$ of the simplicial order restricted to $X_0$, and a terminal segment $B$ of the simplicial order restricted to $X_1$, with $N(A) \cap B = \emptyset$ and $|A|=|B|$ as large as possible.  It follows from Theorem~2 that $A\cup B$ will be a maximum-sized balanced independent set.

The form of $A$ and $B$ depends on the residue of $n$ mod 4.  For $n=4k$ we take
\begin{align*}
 A & = \subsets{n}{0} \cup \subsets{n}{2} \cup \cdots \cup \subsets{n}{2k-2} \cup (12+\subsets{3,n}{2k-2}) \\
 B & = (1+\subsets{3,n}{2k}) \cup \subsets{2,n}{2k+1} \cup \subsets{n}{2k+3} \cup \cdots \cup \subsets{n}{n-3} \cup \subsets{n}{n-1},
\end{align*}
where, for instance,
\[
 12+\subsets{3,n}{2k-2} = \left\{\{1,2\} \cup x : x \subseteq \{3,4,\ldots,n\}, |x|=2k-2\right\}.
\]
For $n = 4k+1$ we take
\begin{align*}
 A & = \subsets{n}{0} \cup \subsets{n}{2} \cup \cdots \cup \subsets{n}{2k-2} \cup (1+\subsets{2,n}{2k-1}) \\
 B & = \subsets{2,n}{2k+1} \cup \subsets{n}{2k+3} \cup \cdots \cup \subsets{n}{n-2} \cup \subsets{n}{n}.
\end{align*}
For $n=4k+2$ we take
\begin{align*}
 A & = \subsets{n}{0} \cup \subsets{n}{2} \cup \cdots \cup \subsets{n}{2k-2} \cup (1+\subsets{2,n}{2k-1}) \cup (2+\subsets{3,n}{2k-1}) \\
 B & = \subsets{3,n}{2k+1} \cup \subsets{n}{2k+3} \cup \cdots \cup \subsets{n}{n-3} \cup \subsets{n}{n-1}.
\end{align*}
Finally, for $n = 4k+3$ we take
\begin{align*}
 A & = \subsets{n}{0} \cup \subsets{n}{2} \cup \cdots \cup \subsets{n}{2k} \\ 
 B & = \subsets{n}{2k+3} \cup \cdots \cup \subsets{n}{n-2} \cup \subsets{n}{n}.
\end{align*}
Verifying that these sets have the claimed sizes, and that $|A|=|B|$ in each case, is a simple application of the identities $\binom{m}{r} = \binom{m-1}{r-1} + \binom{m-1}{r}$, $\binom{m}{r} = \binom{m}{m-r}$ and $\sum_{r=0}^{m} \binom{m}{r} = 2^m$.
\end{proof}

The maximum-sized balanced independent sets constructed above are also maximal independent sets.  For example, if $n=4k+3$, then any set not in the family is adjacent to a complete layer; the other cases are similar, with slight complications in the middle layers of the cube.

\end{document}